\theoremstyle{plain}
\newtheorem{Teo}{Theorem}[section]
\theoremstyle{definition}
\newtheorem{Ex}[Teo]{Example}
\newtheorem{Def}[Teo]{Definition}
\numberwithin{equation}{section}
\DeclareMathOperator{\Lip}{Lip}
\DeclareMathOperator{\spanop}{span}
\title{Characterisation of the weak-star symmetric strong diameter 2 property in Lipschitz spaces}
\author{Andre Ostrak}
\date{}
\begin{document}

%\newpage
\begin{abstract}
We give a characterisation of the weak$^*$ symmetric strong diameter $2$ property for Lipschitz function spaces in terms of a property of the corresponding metric space. Using this characterisation we show that the weak$^*$ symmetric strong diameter $2$ property is different from the weak$^*$ strong diameter $2$ property in Lipschitz spaces, thereby answering a question posed in a recent paper by Haller, Langemets, Lima, and Nadel.
\end{abstract}

\maketitle

\section{Introduction}
We consider only real Banach spaces. We start by fixing some notation. Given a metric space $M$ and a point $x$ in $M$, we denote by $B(x,r)$ the open ball in $M$ centered at $x$ of radius $r$. Let $X$ be a Banach space. We denote the closed unit ball, the unit sphere, and the dual space of $X$ by $B_X$, $S_X$, and $X^*$, respectively. A \emph{weak$^*$ slice} of $B_{X^*}$ is a set of the form
\[
S(B_{X^*},x, \alpha)\coloneqq \{x^*\in B_{X^*}\colon \langle x, x^*\rangle>1-\alpha\},
\]
where $x\in S_X$ and $\alpha>0$.

Let $M$ be a pointed metric space, that is, a metric space with a fixed point $0$. The space $\Lip_0(M)$ of all Lipschitz functions $f\colon M\to \mathbb{R}$ with $f(0)=0$ is a Banach space with the norm
\[
\|f\|_{\Lip} =\sup\left\{ \frac{|f(x)-f(y)|}{d(x,y)}\colon x,y\in M, x\neq y\right\}.
\]
The space
\[
\mathcal{F}(M)\coloneqq \overline{\spanop}\left\{\delta_m \colon m\in M \right\}\subset \Lip_0(M)^*
\]
is called the Lipschitz-free space over $M$, where $\delta_m\colon \Lip_0(M)\to \mathbb{R}$,
\[
\langle f, \delta_m\rangle=f(m),\qquad m\in M,\; f\in \Lip_0(M).
\]
It can be shown that, under this duality, $\mathcal{F}(M)^*$ is isometrically isomorphic to $\Lip_0(M)$.

%Our work belongs to the field of diameter $2$ properties and it is in part motivated and based on previous work done on the weak$^*$ strong diameter $2$ property ($w^*$-SD$2$P).
Recall that the dual space $X^*$ is said to have the \emph{weak$^*$ strong diameter $2$ property} ($w^*$-SD$2$P) if every finite convex combination of weak$^*$ slices of $B_{X^*}$ has diameter $2$. It is well known that $X^*$ has the $w^*$-SD$2$P iff the norm of $X$ is octahedral (\cite{D},\cite{G}, for a proof, see, e.g., \cite{HLP}). Therefore, $\Lip_0(M)$ has the $w^*$-SD$2$P iff the norm of $\mathcal{F}(M)$ is octahedral. Moreover, in \cite[Theorem 3.1]{PR}, it was shown that the norm of $\mathcal{F}(M)$ is octahedral iff the metric space $M$ has the following property.

\begin{Def}
A metric space $M$ is said to have the \emph{long trapezoid property} (LTP) if, for every finite subset $N$ of $M$ and $\varepsilon>0$, there exist $u,v\in M$, $u\neq v$, such that, for any $x,y\in N$,
\begin{equation}\label{eq: LTP}
(1-\varepsilon)\bigl(d(x,y)+d(u,v)\bigr)\leq d(x,u)+d(y,v).
\end{equation}
\end{Def}

\noindent
Therefore, the Lipschitz space $\Lip_0(M)$ has the $w^*$-SD$2$P iff $M$ has the LTP. The objective of this paper is to give a similar characterisation to the following property, which was introduced in \cite{ALN} but studied more extensively in \cite{ANP}, \cite{HLLN}, \cite{CCGMR}, and \cite{LR}.
\begin{Def}
A dual Banach space $X^*$ is said to have the \emph{weak$^*$ symmetric strong diameter $2$ property} ($w^*$-SSD$2$P) if, for every finite family $\{S_i\}_{i=1}^n$ of weak$^*$ slices of $B_{X^*}$ and $\varepsilon>0$, there exist $f_i\in S_i$, $i=1,\ldots,n$, and $g\in B_{X^*}$ such that $f_i\pm g\in S_i$ for every $i\in \{1,\ldots,n\}$ and $\|g\|>1-\varepsilon$.
\end{Def}

\noindent
It is known that in general the $w^*$-SSD$2$P is a strictly stronger property than the $w^*$-SD$2$P (see, e.g., \cite{HLLN}). In this paper, we show that the same is true for Lipschitz function spaces, thus giving an answer to \cite[Question 6.3]{HLLN}.

The paper is organised as follows.

In Section \ref{sec: 2}, we give a characterisation of the $w^*$-SSD$2$P for the Lipschitz space $\Lip_0(M)$ in terms of a property of the metric space $M$. More precisely, we prove Theorem \ref{main}, which says that $\Lip_0(M)$ has the $w^*$-SSD$2$P iff $M$ enjoys the following property.

\begin{Def}
We say that $M$ has the \emph{strong long trapezoid property} (SLTP) if, for every finite subset $N$ of $M$ and $\varepsilon>0$, there exist $u,v\in M$,  $u\neq v$, such that, for any $x,y \in N$, the inequality \eqref{eq: LTP} holds, and, for any $x,y,z,w\in N$,

\begin{equation}\label{eq: SLTP}
\begin{aligned}
(1-\varepsilon)&\bigl(2d(u,v)+d(x,y)+d(z,w)\bigr)\\
&\qquad\qquad\leq d(x,u)+d(y,u)+d(z,v)+d(w,v).
\end{aligned}
\end{equation}
\end{Def}

In Section \ref{sec: 3}, we first apply Theorem \ref{main} to show that, for Lipschitz spaces, the $w^*$-SSD$2$P is a strictly stronger property than the $w^*$-SD$2$P: Example \ref{ex1} provides a metric space which has the LTP but not the SLTP.

%Example \ref{eq: LTP} provides a metric space which has the LTP but not the SLTP, thus (by Theorem \ref{main}) showing that the $w^*$-SSD$2$P is a strictly stronger property than the $w^*$-SD$2$P for Lipschitz spaces.

A question that arises from the definition of the SLTP is whether the inequality \eqref{eq: SLTP} implies \eqref{eq: LTP}. Example \ref{ex2} shows that this is not the case: it provides a metric space $M$ for which \eqref{eq: SLTP} holds for every finite subset $N$, but which fails the LTP.

We finish the paper by showing that any infinite subset of $\ell_1$, viewed as a metric space, has the SLTP (Example \ref{ell1}).

\section{Main result}\label{sec: 2}

\begin{Teo}\label{main}
Let $M$ be a pointed metric space. The following statements are equivalent:
\begin{enumerate}[\upshape (i)]
    \item $\Lip_0(M)$ has the $w^*$-SSD$2$P;
    \item $M$ has the SLTP.
\end{enumerate}
\end{Teo}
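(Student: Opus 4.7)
The plan rests on the elementary observation that, for $f, g \in \Lip_0(M)$, the conjunction $\|f + g\|_\Lip \leq 1$ and $\|f - g\|_\Lip \leq 1$ is equivalent (via $\max(|A+B|, |A-B|) = |A| + |B|$) to the pointwise inequality
\[
|f(x) - f(y)| + |g(x) - g(y)| \leq d(x, y) \qquad \text{for all } x, y \in M.
\]
This converts the symmetric slice condition $f_i \pm g \in S_i$ into a single pointwise bound compatible with the SLTP geometry.

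For (i) $\Rightarrow$ (ii): Given a finite $N \subset M$ with $0 \in N$ and $\varepsilon > 0$, I apply the $w^*$-SSD$2$P to the finite family of weak$^*$ slices $S(B_{\Lip_0(M)}, m_{x,y}, \alpha)$ indexed by pairs $x \neq y$ in $N$, where $m_{x,y} := (\delta_x - \delta_y)/d(x,y)$ and $\alpha$ is small relative to $\varepsilon$ and $\mathrm{diam}(N)$. This yields witnesses $f_{x,y}$ and a common $g$ with $\|g\| > 1 - \varepsilon$. Since both $f_{x,y}$ and $f_{x,y} \pm g$ lie in $S(B_{\Lip_0(M)}, m_{x,y}, \alpha)$, one deduces $(f_{x,y}(x) - f_{x,y}(y))/d(x,y) > 1 - \alpha$ and $|g(x) - g(y)|/d(x,y) < \alpha$; thus $g|_N$ is $\alpha$-Lipschitz with $g(0) = 0$, so $|g(m)| \leq \alpha\, \mathrm{diam}(N)$ for $m \in N$. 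Pick $u \neq v$ in $M$ with $(g(u) - g(v))/d(u,v) > 1 - \varepsilon$, so that $|g(u)| + |g(v)| \geq g(u) - g(v) > (1 - \varepsilon)d(u,v)$. Applying the boxed pointwise inequality with $f = f_{x,y}$ at the pairs $(x,u)$ and $(y,u)$ and with $f = f_{z,w}$ at $(z,v)$ and $(w,v)$, and summing all four bounds, I lower-bound the $f$-contributions via the triangle inequalities $|f_{x,y}(x) - f_{x,y}(u)| + |f_{x,y}(y) - f_{x,y}(u)| \geq (1-\alpha) d(x,y)$ (and the analogue for $f_{z,w}$), and the $g$-contributions via $|g(a) - g(u)| \geq |g(u)| - \alpha\,\mathrm{diam}(N)$ (and the analogue at $v$). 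The total lower bound $(1-\alpha)(d(x,y) + d(z,w)) + 2(|g(u)| + |g(v)|) - 4\alpha\,\mathrm{diam}(N)$ exceeds $(1-\alpha)(d(x,y) + d(z,w)) + (2 - O(\varepsilon+\alpha))d(u,v)$, giving \eqref{eq: SLTP} after absorbing the error. Inequality \eqref{eq: LTP} follows analogously from the boxed bound at the pair $(x,u)$ and $(y,v)$.

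For (ii) $\Rightarrow$ (i): By weak$^*$-density of finitely supported elements in $\mathcal{F}(M)$, I reduce to $\mu_i = \sum_{m \in N} a_{i,m}\delta_m$ on a common finite $N \ni 0$ with $\|\mu_i\| = 1$. Apply SLTP to $N$ with a small $\eta > 0$ to obtain $u \neq v$ in $M$. The construction is to define $g$ on $N \cup \{u,v\}$ by $g(u) = a$, $g(v) = -a$ with $a = (1-\eta_0)d(u,v)/2$, and $g(m)$ for $m \in N$ chosen inside the admissible interval forced by the $1$-Lipschitz requirement on $N \cup \{u,v\}$ (which is nonempty by \eqref{eq: LTP} at $x = y = m$), further tuned so that $\langle \mu_i, g\rangle$ is small for each $i$; $g$ is then extended to $M$ via McShane. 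For each $i$, scale a near-norming Lipschitz function $\tilde f_i$ for $\mu_i$ on $N$ by $1 - O(\eta)$ and assign $f_i(u), f_i(v)$ so that the pointwise inequality $|f_i(x) - f_i(y)| + |g(x) - g(y)| \leq d(x,y)$ holds on $N \cup \{u,v\}$; extending via McShane to $M$ then yields $f_i \pm g \in B_{\Lip_0(M)}$. The crucial step is the feasibility of the $f_i(u)$ and $f_i(v)$ assignments, which reduces to a system of inequalities in four-point configurations with two points of $N$ on the $u$-side and two on the $v$-side. These same-side bounds are exactly what \eqref{eq: SLTP} secures.

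The main obstacle is this simultaneous feasibility: \eqref{eq: LTP} controls only the ``crossing'' sums $d(x,u) + d(y,v)$, which suffices for the unsymmetric $w^*$-SD$2$P (where one can arrange a separate perturbation $g_i$ for each slice), whereas the common $g$ of the $w^*$-SSD$2$P forces simultaneous control of the ``same-side'' sums $d(x,u) + d(y,u)$ and $d(z,v) + d(w,v)$, which is precisely the content of the four-point inequality \eqref{eq: SLTP}.
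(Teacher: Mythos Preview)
Your (i)$\Rightarrow$(ii) direction is essentially the paper's argument, organised around the pointwise identity $|f(x)-f(y)|+|g(x)-g(y)|\le d(x,y)$. One quibble: the claim that $2(|g(u)|+|g(v)|)-4\alpha\,\mathrm{diam}(N)\ge (2-O(\varepsilon+\alpha))\,d(u,v)$ is not justified, since nothing bounds $\mathrm{diam}(N)/d(u,v)$. The fix (which the paper also uses) is to absorb the error into the $d(x,y)+d(z,w)$ term when it is nonzero---this requires choosing $\alpha$ small relative to the minimum positive distance in $N$ as well---and to treat the degenerate case $x=y$, $z=w$ separately via the already-established \eqref{eq: LTP}.

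The real gap is in (ii)$\Rightarrow$(i). Extending $g$ and $f_i$ independently by McShane does \emph{not} preserve the coupled bound $|f_i(x)-f_i(y)|+|g(x)-g(y)|\le d(x,y)$ outside $N\cup\{u,v\}$: the function $\rho(x,y):=d(x,y)-|g(x)-g(y)|$ is in general not a metric, so no McShane-type extension of $f_i$ subject to this constraint is available; and extending $f_i+g$ and $f_i-g$ separately and averaging yields a perturbation depending on $i$, which is forbidden. The paper avoids this by first defining $g$ globally as a concrete bump: $g=r-d(\cdot,u)$ on $B(u,r)$, $g=-(s-d(\cdot,v))$ on $B(v,s)$, and $0$ elsewhere, with the heights $r,s$ chosen \emph{asymmetrically} so that $r\le r_0$, $s\le s_0$ (the same-side slacks at $u$ and $v$) and $r+s=(1-\varepsilon)^2 d(u,v)$; the SLTP is precisely what guarantees $r_0+s_0\ge(1-\varepsilon)d(u,v)$. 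Each $f_i$ is then made constant on the two balls and extended to the rest of $M$ by $f_i(y)=\sup_{x\in L}\bigl(f_i(x)+|g(x)|-d(x,y)\bigr)$, after which a case analysis (using the specific shape of $g$) verifies $\|f_i\pm g\|\le 1$. Your sketch misses this mechanism, and the symmetric choice $g(u)=a=-g(v)$ can already fail when $u$ is much closer to $N$ than $v$ is.
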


\begin{proof}
(i)$\Rightarrow$(ii).
Assume that $\Lip_0(M)$ has the $w^*$-SSD$2$P,
and let $N$ be a finite subset of $M$ and $0<\varepsilon<1$.
Choose $\alpha>0$ such that $2\alpha<\varepsilon$ and, for any $x,y\in N$, $x\neq y$,
\[
\alpha\leq\frac{1}{d(x,y)}
\quad\text{and}\quad
2\alpha\leq d(x,y).
\]
For any $x,y\in N$, $x\neq y$,
define a slice $S_{x,y}:=S\left(B_{\Lip_0(M)}, \frac{\delta_x-\delta_y}{d(x,y)}, \alpha^3\right)$.
Since $\Lip_0(M)$ has the $w^*$-SSD$2$P, we can find $f_{x,y}\in S_{x,y}$ and $g\in B_{\Lip_0(M)}$, $\|g\|\geq 1-\alpha$, such that $\|f_{x,y}\pm g\|\leq 1$. For $x,y\in N$, $x=y$, define $f_{x,y}:=0\in\Lip_0(M)$.

For any $x,y\in N$,
\begin{align*}
\langle f_{x,y}, \delta_x-\delta_y \rangle
=f_{x,y}(x)-f_{x,y}(y)
\geq(1-\alpha^3)d(x,y),
\end{align*}
therefore, keeping in mind that $\|f_{x,y}\pm g\|\leq1$,
\[
|\langle g, \delta_x-\delta_y \rangle|=|g(x)-g(y)|\leq \alpha^3 d(x,y)\leq \alpha^2.
\]
Since $\|g\|\geq 1-\alpha$, there exist $u,v\in M$, $u\neq v$, such that
\[
\langle g, \delta_u-\delta_v \rangle
=g(u)-g(v)\geq\left(1-\alpha\right)d(u,v).
\]
Now, for any $x,y\in N$, again using that $\|f_{x,y}\pm g\|\leq1$,
\begin{align*}
|\langle f_{x,y},\delta_u-\delta_v\rangle|=|f_{x,y}(u)-f_{x,y}(v)|\leq \alpha d(u,v).
\end{align*}

Letting $x,y,z,w\in N$ be arbitrary, it remains to verify \eqref{eq: LTP} and \eqref{eq: SLTP}.
Since $\|f_{x,y}\pm g\|\leq 1$, we get
\begin{align*}
(1-\varepsilon)&\bigl(d(u,v)+d(x,y)\bigr)\\
&\leq (1-2\alpha)d(u,v)+(1-2\alpha^3)d(x,y)\\
&\leq \langle g,\delta_u-\delta_v\rangle - \langle f_{x,y},\delta_u-\delta_v\rangle
+\langle f_{x,y}, \delta_x-\delta_y\rangle - \langle g, \delta_x-\delta_y\rangle\\
&=\langle g-f_{x,y},\delta_u-\delta_x\rangle-\langle g-f_{x,y}, \delta_v-\delta_y\rangle\\
&\leq d(x,u)+d(y,v).
\end{align*}
Thus, \eqref{eq: LTP} holds.
If $x=y$ and $z=w$, then \eqref{eq: SLTP} follows from \eqref{eq: LTP} with $y$ replaced by $z$.
If $x\neq y$ or $z\neq w$, then
\[
\alpha\bigl(d(x,y)+d(z,w)\bigr)
\geq 2\alpha^2
\geq|\langle g, \delta_z-\delta_x+\delta_w-\delta_y \rangle|,
\]
and thus, since $\|f_{x,y}\pm g\|\leq 1$,
\begin{align*}
(1-\varepsilon)&\bigl(2d(u,v)+d(x,y)+d(z,w)\bigr)\\
&\leq 2\bigl(g(u)-g(v)\bigr)+(1-\alpha^3-\alpha)\bigl(d(x,y)+d(z,w)\bigr)\\
%& \leq 2\bigl(g(u)-g(v)\bigr)+f_{x,y}(x)-f_{x,y}(y)+f_{z,w}(z)-f_{z,w}(w)-2\alpha\\
&\leq 2\langle g, \delta_u-\delta_v \rangle + \langle f_{x,y}, \delta_x-\delta_y \rangle
      +\langle f_{z,w}, \delta_z-\delta_w \rangle\\
&\qquad +\langle g, \delta_z-\delta_x+\delta_w-\delta_y \rangle\\
&=\langle g-f_{x,y}, \delta_u-\delta_x \rangle + \langle g+f_{x,y}, \delta_u-\delta_y \rangle\\
&\qquad -\langle g+f_{z,w}, \delta_v-\delta_z \rangle - \langle g-f_{z,w}, \delta_v-\delta_w \rangle\\
&\leq d(x,u)+d(y,u)+d(z,v)+d(w,v).
\end{align*}

\medskip
(ii)$\Rightarrow$(i).
Assume that $M$ has the SLTP. Let $n\in\mathbb{N}$, let $S_i:=S(B_{\Lip_0(M)}, \mu_i,\alpha_i)$, $i=1,\dotsc,n$,
be weak$^\ast$ slices of $B_{\Lip_0(M)}$, and let $0<\varepsilon<1$.
It suffices to find $f_i\in S_i$, $i=1,\dotsc,n$, and $g\in B_{\Lip_0(M)}$ with $\|g\|\geq(1-\varepsilon)^2$ such that
$\|f_i\pm g\|\leq1$ for every $i\in\{1,\dotsc,n\}$.
We may assume that, for every $i\in\{1,\dotsc,n\}$, one has $\mu_i=\sum_{j=1}^{n_i}\lambda_{ij}\delta_{x_{ij}}$
for some $n_i\in\mathbb{N}$, $\lambda_{ij}\in\mathbb{R}\setminus\{0\}$, and $x_{ij}\in M$, $j=1,\dotsc,n_i$.
Now $N:=\{0\}\cup\bigcup_{i=1}^{n}\{x_{i1},\dotsc,x_{in_i}\}$ is a finite subset of $M$.
We may also assume that $\varepsilon<\min_{1\leq i\leq n}\alpha_i$.
This enables, for every $i\in\{1,\dotsc,n\}$, to pick an $ h_i\in S_i$ with $\| h_i\|<1-\varepsilon$.

By the SLTP, there exist $u,v\in M$, $u\neq v$, satisfying \eqref{eq: LTP} and \eqref{eq: SLTP} for all $x,y,z,w\in N$.
Setting
\[
r_0:=\frac{1}{2}\min_{x,y\in N}\bigl(d(x,u)+d(y,u)-(1-\varepsilon)d(x,y)\bigr)
\]
and
\[
s_0:=\frac{1}{2}\min_{z,w\in N}\bigl(d(z,v)+d(w,v)-(1-\varepsilon)d(z,w)\bigr),
\]
one has $r_0+s_0\geq(1-\varepsilon)d(u,v)$. Thus, there exist $r,s\geq 0$ with $r \leq  r_0$ and $s\leq s_0$ such that
\[
r+s=(1-\varepsilon)^2 d(u,v).
\]
We may assume that $r>0$. Define a function $g\colon M\to\mathbb{R}$ by
\[
g(x):=
\begin{cases}
r-d(x,u)&\quad\text{if $x\in B(u,r)$;}\\
-s+d(x,v)&\quad\text{if $x\in B(v,s)$;}\\
0&\quad\text{otherwise}
\end{cases}
\]
(we use the convention $B(v,s)=\emptyset$ if $s=0$). Observe that $\|g\|\leq 1$
(here we use that, whenever $x\in B(u,r)$ and $y\in B(v,s)$, one has $g(y)\leq 0 \leq g(x)$, and thus $|g(x)-g(y)|=g(x)-g(y)$).
One also has $\|g\|\geq(1-\varepsilon)^2$, because
\[
|g(u)-g(v)|=g(u)-g(v)=r+s=(1-\varepsilon)^2 d(u,v).
\]

Set $L:=N\cup B$ where $B:=B(u,r)\cup B(v,s)$.
We next show that, for every $i\in\{1,\dotsc,n\}$, there is a $c_i\in\mathbb{R}$ such that,
defining a function $f_i\colon L\to\mathbb{R}$ by $f_i|_N= h_i|_N$ and $f_i|_{B}=c_i$ (observe that $B\cap N=\emptyset$), one has
%$\|f_i\|_{\Lip_0(L)}\leq 1$, 
$\|f_i\pm g\|_{\Lip_0(L)}\leq 1$ and $\|f_i\pm |g|\|_{\Lip_0(L)}\leq 1$.

Let $i\in\{1,\dotsc,n\}$. Set
\begin{alignat*}{2}
\widecheck{a}_i&:=\max_{x\in N}\bigl( h_i(x)-d(x,u)\bigr),\quad
&\widehat{a}_i&:=\min_{x\in N}\bigl( h_i(x)+d(x,u)\bigr),\\
\widecheck{b}_i&:=\max_{x\in N}\bigl( h_i(x)-d(x,v)\bigr),\quad
&\widehat{b}_i&:=\min_{x\in N}\bigl( h_i(x)+d(x,v)\bigr).
\end{alignat*}
Whenever $x,y\in N$, since $\| h_i\|<1-\varepsilon$, one has
\[
 h_i(x)+d(x,u)-\bigl( h_i(y)-d(y,u)\bigr)\geq d(x,u)+d(y,u)-(1-\varepsilon)d(x,y)\geq 2r,
\]
and, by \eqref{eq: LTP},
\begin{align*}
 h_i(x)+d(x,u)-\bigl( h_i(y)-d(y,v)\bigr)
&\geq d(x,u)+d(y,v)-(1-\varepsilon)d(x,y)\\
&\geq (1-\varepsilon)d(u,v)> r+s.
\end{align*}
Thus, $\widehat{a}_i-r\geq \widecheck{a}_i+r$ and $\widehat{a}_i-r> \widecheck{b}_i+s$.
Similarly, one observes that $\widehat{b}_i-s\geq \widecheck{b}_i+s$
and $\widehat{b}_i-s > \widecheck{a}_i+r$.
It follows that there exists a
$c_i\in\bigl[\widecheck{a}_i+r,\widehat{a}_i-r\bigr]\cap\bigl[\widecheck{b}_i+s,\widehat{b}_i-s\bigr]$.
%It is straightforward to verify that 
This $c_i$ does the job.

Indeed, let $x\in N$ and $y\in B(u,r)$.
%One has
%\[
%|f_i(x)-f_i(y)|=| h_i(x)-c_i|\leq d(x,y),
%\]
%because, keeping in mind that $|d(x,u)-d(x,y)|\leq d(y,u)\leq r$,
%\begin{align*}
% h_i(x)-d(x,y)
%&\leq h_i(x)-d(x,u)+r\\
%&\leq\widecheck{a}_i+r\leq c_i\leq \widehat{a}_i-r\\
%&\leq h_i(x)+d(x,u)-r\\
%&\leq h_i(x)+d(x,y).
%\end{align*}
In order to see that
\[
\bigl|f_i(x)\pm g(x)-\bigl(f_i(y)\pm g(y)\bigr)\bigr|
=\Bigl| h_i(x)-\Bigl(c_i\pm \bigl(r-d(y,u)\bigr)\Bigr)\Bigr|
\leq d(x,y),
\]
it suffices to show that
%\[
% h_i(x)-d(x,y)
%\leq c_i\pm \bigl(r-d(y,u)\bigr)
%\leq h_i(x)+d(x,y)
%\]
%or, equivalently,
\begin{equation}\label{eq: ...=<C_i-+A=<...}
 h_i(x)-d(x,y)\pm d(y,u)
\leq c_i\pm r
\leq h_i(x)+d(x,y)\pm d(y,u).
\end{equation}
These inequalities hold:
\begin{align*}
 h_i(x)-d(x,y)-d(y,u)
&\leq h_i(x)-d(x,u)\\
&\leq\widecheck{a}_i\leq c_i-r\leq\widehat{a}_i-2r\\
&\leq h_i(x)+d(x,u)-2d(y,u)\\
&\leq h_i(x)+d(x,y)-d(y,u)
\end{align*}
and
\begin{align*}
 h_i(x)-d(x,y)+d(y,u)
&\leq h_i(x)-d(x,u)+2d(y,u)\\
&\leq\widecheck{a}_i+2r\leq c_i+r\leq\widehat{a}_i\\
&\leq h_i(x)+d(x,u)\\
&\leq h_i(x)+d(x,y)+d(y,u).
\end{align*}
The inequalities
\[
\bigl|f_i(x)\pm|g(x)|-\bigl(f_i(y)\pm |g(y)|\bigr)\bigr|
=\Bigl| h_i(x)-\Bigl(c_i\pm \bigl(r-d(y,u)\bigr)\Bigr)\Bigr|
\leq d(x,y)
\]
%it suffices to show that
%\[
% h_i(x)-d(x,y)\pm d(y,u)
%\leq c_i\pm r
%\leq h_i(x)+d(x,y)\pm d(y,u).
%\]
%These inequalities hold---we have already verified \eqref{eq: ...=<C_i-+A=<...}.%
follow from \eqref{eq: ...=<C_i-+A=<...}.

For every $i\in\{1,\dotsc,n\}$, we extend $f_i$ to the entire space $M$ by setting
\[
f_i(y):=\sup_{x\in L}\bigl(f_i(x)+|g(x)|-d(x,y)\bigr)
\quad\text{for every $y\in M\setminus L$.}
\]
Note that, on $M\setminus L$, the function $f_i$ agrees with a norm preserving extension of $(f_i+|g|)|_L$.
%it is straightforward to verify that $\|f_i\|_{\Lip_0(M)}\leq 1$.
%Indeed, to this end, it suffices to show that,
%whenever $x,y\in M$, one has
%\begin{equation}\label{eq: -d(x,y)=<f_i(x)-f_i(y)=<d(x,y)}
%-d(x,y)\leq f_i(x)-f_i(y)\leq d(x,y).
%\end{equation}
%In fact, it suffices to consider the case when $x\in B(u,r)\cup B(v,s)$ and $y\in M\setminus L$.
%In this case, \eqref{eq: -d(x,y)=<f_i(x)-f_i(y)=<d(x,y)} means that
%\[
%-d(x,y)\leq c_i-\sup_{z\in L}\bigl(f_i(z)+|g(z)|-d(z,y)\bigr)\leq d(x,y).
%\]
%Thus, it suffices to show that
%\begin{enumerate}
%\item
%there is a $z\in L$ such that
%\[
%c_i-d(x,y)+d(z,y)\leq f_i(z)+|g(z)|;
%\]
%%
%\item
%for every $z\in L$,
%\[
%f_i(z)+|g(z)|\leq c_i+d(x,y)+d(z,y).
%\]
%%
%\end{enumerate}
%For (1), one may take $z=x$, so it remains to prove (2).
%By symmetry, it suffices to consider only the case when $x\in B(u,r)$.
%One has to look through the following cases:
%\[
%(\text{a})\quad z\in B(u,r);\qquad\qquad
%(\text{b})\quad z\in B(v,s);\qquad\qquad
%(\text{c})\quad z\in N.
%\]
%
%(a).
%If $z\in B(u,r)$, then, keeping in mind that $r\leq d(y,u)\leq d(y,z)+d(z,u)$, one has
%\[
%f_i(z)+|g(z)|=c_i+r-d(z,u)\leq c_i+d(y,z).
%\]
%
%(b).
%If $z\in B(v,s)$, then, keeping in mind that $s\leq d(y,v)\leq d(y,z)+d(z,v)$, one has
%\[
%f_i(z)+|g(z)|=c_i+s-d(z,v)\leq c_i+d(y,z).
%\]
%
%(c).
%If $z\in N$, then
%\begin{align*}
%f_i(z)+|g(z)|&=f_i(z)=f_i(z)-f_i(x)+f_i(x)\leq d(z,x)+c_i\\
%&\leq  c_i+d(z,y)+d(x,y).
%\end{align*}
It remains to show that $\|f_i\pm g\|_{\Lip_0(M)}\leq1$.
Indeed, this implies that also  $\|f_i\|_{\Lip_0(M)}\leq 1$, and thus $f_i\in S_i$,
because, since $f_i|_N= h_i|_N$, one has $\langle\mu_i, f_i\rangle= \langle \mu_i, h_i\rangle>1-\alpha_i$. 

Let $i\in\{1,\dotsc,n\}$.
To see that $\|f_i\pm g\|_{\Lip_0(M)}\leq1$, it suffices to show that,
whenever $x,y\in M$, one has
\begin{equation}\label{eq: -d(x,y)=<f_i(x)+-g(x)-(f_i(y)+-g(y))=<d(x,y)}
-d(x,y)\leq f_i(x)\pm g(x)-\bigl(f_i(y)\pm g(y)\bigr)\leq d(x,y).
\end{equation}
For the cases when $x,y\in L$ or $x,y\in M\setminus L$,
or $x\in N$ (or $y\in N$) and $y\in M\setminus L$ (or $x\in M\setminus L$),
the inequalities \eqref{eq: -d(x,y)=<f_i(x)+-g(x)-(f_i(y)+-g(y))=<d(x,y)}
follow from what has been proven above.
So, in fact, it suffices to consider the case when $x\in B(u,r)\cup B(v,s)$ and $y\in M\setminus L$.
In this case, \eqref{eq: -d(x,y)=<f_i(x)+-g(x)-(f_i(y)+-g(y))=<d(x,y)} means that
\[
-d(x,y)\leq c_i\pm g(x)-\sup_{z\in L}\bigl(f_i(z)+|g(z)|-d(z,y)\bigr)\leq d(x,y).
\]
Thus, it suffices to show that
\begin{enumerate}
\item
there is a $z\in L$ such that
\[
c_i\pm g(x)-d(x,y)+d(z,y)\leq f_i(z)+|g(z)|;
\]
\item
for every $z\in L$,
\[
f_i(z)+|g(z)|\leq c_i\pm g(x)+d(x,y)+d(z,y).
\]
\end{enumerate}
For (1), one may take $z=x$, so it remains to prove (2).
By symmetry, it suffices to consider only the case when $x\in B(u,r)$.
In this case $g(x)=r-d(x,u)\geq0$. Thus, it suffices to prove that, for every $z\in L$,
\begin{equation*}\label{eq: what one must show when x in B(u,A)}
f_i(z)+|g(z)|\leq c_i-r +d(x,u)+d(x,y)+d(z,y).
\end{equation*}
One has to look through the following cases:
\[
(\text{a})\quad z\in B(u,r);\qquad\qquad
(\text{b})\quad z\in B(v,s);\qquad\qquad
(\text{c})\quad z\in N.
\]

(a).
If $z\in B(u,r)$, then $f_i(z)=c_i$ and $|g(z)|=r-d(z,u)$.
Thus, one has to show that
\[
2r\leq d(x,u)+d(z,u)+d(x,y)+d(z,y).
\]
This inequality holds, because, since $y\notin B(u,r)$, one has $d(y,u)\geq r$, and thus
\[
2r\leq d(y,u)+d(y,u)\leq d(x,u)+d(x,y)+d(z,u)+d(z,y).
\]

(b).
If $z\in B(v,s)$, then  $f_i(z)=c_i$ and $|g(z)|=s-d(z,v)$.
Thus, one has to show that
\[
r+s\leq d(x,u)+d(z,v)+d(x,y)+d(z,y).
\]
This inequality holds, because, since $y\notin B(u,r)$ and $y\notin B(v,s)$,
one has $d(y,u)\geq r$ and $d(y,v)\geq s$, and thus
\[
r+s\leq d(y,u)+d(y,v)\leq d(x,u)+d(x,y)+d(z,v)+d(z,y).
\]

(c).
If $z\in N$, then
\begin{align*}
f_i(z)+|g(z)|&=f_i(z)= h_i(z)\leq\widecheck{a}_i+d(z,u)\\
&\leq c_i-r+d(x,u)+d(x,y)+d(z,y).
\end{align*}
%\smallskip
%
\end{proof}

\section{Examples}\label{sec: 3}

We now give an example of a metric space $M$ that has the LTP but fails the SLTP. By \cite[Theorem 3.1]{PR} and Theorem \ref{main}, this implies that the corresponding Lipschitz space $\Lip_0(M)$ has the $w^*$-SD$2$P but fails the $w^*$-SSD$2$P.

\begin{Ex}\label{ex1}
Let $M=\{a_1, a_2, b_1, b_2\}\cup \{u_i,v_i\colon i\in \mathbb{N}\}$ be a metric space where the distances between different points are defined as follows: for any $i\in \{1,2\}$, $j,k,l\in \mathbb{N}$, $k\neq l,$
\begin{align*}
d(a_1,a_2)&=d(b_1,b_2)=d(a_i,v_j)=d(b_i,u_j)\\
&=d(u_k,u_l)=d(v_k,v_l)=d(u_k,v_l)=2
\end{align*}
and, for any $i,j\in \{1,2\}$, $k\in \mathbb{N}$,
\begin{align*}
d(a_i,b_j)=d(a_i,u_k)=d(b_i,v_k)=d(u_k,v_k)=1.
\end{align*}

\begin{comment}
\begin{figure}
    \centering
\begin{tikzpicture}
\draw[line width=0.5mm] (4,1) -- (2, 6) -- (6, 6) -- (8,1) -- (0,3) -- (2,6) -- (6,6) -- (4,3) -- cycle;
\draw[line width=0.5mm] (4,1) -- (8,1);
\draw[line width=0.5mm] (4, 3) -- (0, 3);
\draw[gray, line width=0.25mm] (4,1) -- (0,3);
\draw[gray, line width=0.25mm] (8,1) -- (4,3);
\draw[gray, line width=0.25mm] (8,1) -- (2, 6);
\draw[gray, line width=0.25mm] (4,1) -- (6, 6);
\draw[gray, dashed, line width=0.25mm] (4,3) -- (2,6);
\draw[gray, dashed, line width=0.25mm] (0,3) -- (6,6);
\draw (0,3) node[left] {$a_1$};
\draw (4,1) node[below left] {$a_2$};
\draw (4,3) node[right] {$b_1$};
\draw (8,1) node[below right] {$b_2$};
\draw (2,6) node[left] {$u_k$};
\draw (6,6) node[right] {$v_k$};
\end{tikzpicture}
    \caption{A representation of the metric space $M$ in Example \ref{ex1}. The distance between two points corresponds to the thickness of the lines. Thicker lines -- distance is equal to $1$, thinner lines -- distance is equal to $2$.
    The distances between points connected by a straight line segment are $1$, the distances between other different points are $2$.}
    \label{fig1}
\end{figure}

\end{comment}

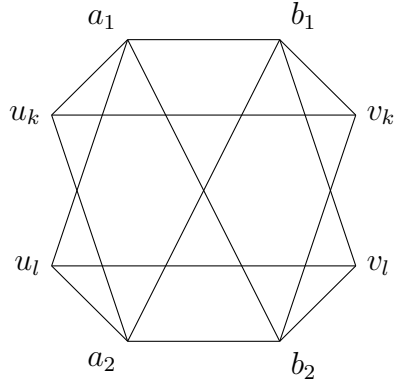
\begin{figure}[h]
    \centering
\begin{tikzpicture}
\draw (1,4) node[above left] {$a_1$};
\draw (1,0) node[below left] {$a_2$};
\draw (3,4) node[above right] {$b_1$};
\draw (3,0) node[below right] {$b_2$};
\draw (0,3) node[left] {$u_k$};
\draw (0,1) node[left] {$u_l$};
\draw (4,3) node[right] {$v_k$};
\draw (4,1) node[right] {$v_l$};
\draw (1,4) -- (3,4);
\draw (1,4) -- (3,0);
\draw (1,4) -- (0,3);
\draw (1,4) -- (0,1);
\draw (1,0) -- (3,4);
\draw (1,0) -- (3,0);
\draw (1,0) -- (0,3);
\draw (1,0) -- (0,1);
\draw (3,4) -- (4,3);
\draw (3,4) -- (4,1);
\draw (3,0) -- (4,3);
\draw (3,0) -- (4,1);
\draw (0,3) -- (4,3);
\draw (0,1) -- (4,1);
\end{tikzpicture}
    \caption{A representation of the metric space $M$ in Example \ref{ex1}. The distances between points connected by a straight line segment are $1$, the distances between other different points are $2$.}
    \label{fig1}
\end{figure}

We first show that $M$ has the LTP. Letting $N$ be a finite subset of $M$ and $i\in \mathbb{N}$ be such that $u_i,v_i\in M\setminus N$, it suffices to show that, for any $x,y\in N$,
\[
d(x,y)+d(u_i,v_i)=d(x,y)+1\leq d(x,u_i)+d(y,v_i).
\]
To this end, letting $x,y\in M\setminus \{u_i,v_i\}$ be such that $d(x,y)=2$, it suffices to show that
\[
d(x,u_i)+d(y,v_i)\geq 3.
\]
For this, notice that if $x\in \{a_1, a_2\}$, then either $y\in \{a_1,a_2\}$ or $y\in \{v_j\colon j\in \mathbb{N}\setminus\{i\} \}$, but in both of these cases $d(y,v_i)=2$ and $d(x,u_i)=1$; if $x\in \{b_1,b_2\}\cup \{u_j,v_j\colon j\in \mathbb{N}\setminus\{i\}\}$, then $d(x,u_i)=2$ and $d(y,v_i)\geq 1$.

It remains to show that $M$ fails the SLTP. Take $N\coloneqq \{a_1,a_2,b_1,b_2\}$. Then, for any $u,v\in M$, $u\neq v$, there exist $x,y,z,w\in N$ such that
\[
2d(u,v)+d(x,y)+d(z,w)\geq d(x,u)+d(y,u)+d(z,v)+d(w,v)+1.
\]
Indeed, set $U\coloneqq \{u_i\colon i\in \mathbb{N}\}$ and $V\coloneqq\{v_i\colon i\in \mathbb{N}\}$, and suppose that $u,v\in M$, $u\neq v$.
If $u,v\in U$ or $u,v\in V$, then, respectively, for $x=z=a_1$, $y=w=a_2$, and for $x=z=b_1$, $y=w=b_2$,
\begin{align*}
2d(u,v)+d(x,y)+d(z,w)&=8>4\\
&=d(x,u)+d(y,u)+d(z,v)+d(w,v).
\end{align*}
If $u\in U$ and $v\in V$, or $u\in V$ and $v\in U$, then, respectively, for $x=a_1$, $y=a_2$, $z=b_1$, $w=b_2$, and for $x=b_1$, $y=b_2$, $z=a_1$, $w=a_2$,
\begin{align*}
2d(u,v)+d(x,y)+d(z,w)&\geq 6>4\\
&=d(x,u)+d(y,u)+d(z,v)+d(w,v).
\end{align*}
Finally, if $u\in N$ or $v\in N$, then, respectively, for $x=y=u$ and $z,w\in N$ with $d(z,w)=2$ and $d(z,v)=d(w,v)=1$, and for $z=w=v$ and $x,y\in N$ with $d(x,y)=2$ and $d(x,u)=d(y,u)=1$,
\begin{align*}
2d(u,v)+d(x,y)+d(z,w)&\geq 4>2\\
&=d(x,u)+d(y,u)+d(z,v)+d(w,v).
\end{align*}
\end{Ex}

The following example shows that the inequality \eqref{eq: SLTP} in the definition of the SLTP does not imply \eqref{eq: LTP}.

\begin{Ex}\label{ex2}
Let $M=\{a,b\}\cup \{u_i,v_i\colon i\in \mathbb{N}\}$ be a metric space where the distances between different points are defined as follows: for any $i, j\in \mathbb{N}$, $i\neq j$,
\begin{align*}
d(a,b)=d(a,v_i)=d(b, u_i)=d(u_i,u_j)=d(v_i,v_j)=d(u_i,v_j)=2
\end{align*}
and, for any $i\in \mathbb{N}$,
\begin{align*}
d(a,u_i)=d(b,v_i)=d(u_i,v_i)=1.
\end{align*}

\begin{comment}
\begin{figure}
    \centering
\begin{tikzpicture}[scale=1.5]
\draw[line width=0.5mm] (0,0) -- (1,2) -- (3,2)--(4,0);
\draw[gray, line width=0.01mm] (0,0) -- (4,0);
\draw[gray, line width=0.01mm] (0,0) -- (3,2);
\draw[gray, line width=0.01mm] (1,2) -- (4,0);
\draw (0,0) node[left] {$a$};
\draw (4,0) node[right] {$b$};
\draw (1,2) node[left] {$u_i$};
\draw (3,2) node[right] {$v_i$};
\end{tikzpicture}
    \caption{A representation of the metric space $M$ in Example \ref{ex2}. The distance between two points corresponds to the thickness of the lines. Thicker lines -- distance is equal to $1$, thinner lines -- distance is equal to $2$.}
    \label{fig2}
\end{figure}

\end{comment}

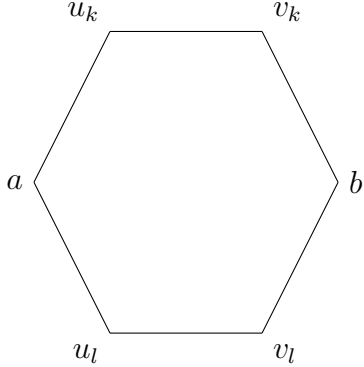
\begin{figure}[h]
    \centering
\begin{tikzpicture}[scale=1]
\draw (0,2) node[left] {$a$};
\draw (4,2) node[right] {$b$};
\draw (1,4) node[above left] {$u_k$};
\draw (1,0) node[below left] {$u_l$};
\draw (3,4) node[above right] {$v_k$};
\draw (3,0) node[below right] {$v_l$};
\draw (0,2) -- (1,4);
\draw (0,2) -- (1,0);
\draw (4,2) -- (3,4);
\draw (4,2) -- (3,0);
\draw (1,4) -- (3,4);
\draw (1,0) -- (3,0);
\end{tikzpicture}
    \caption{A representation of the metric space $M$ in Example \ref{ex2}. The distances between points connected by a straight line segment are $1$, the distances between other different points are $2$.}
    \label{fig2}
\end{figure}

For any finite subset $N$ of $M$, we can find an $i\in \mathbb{N}$ such that $u_i,v_i\in M\setminus N$. We first show that, for any $x,y,z,w\in N$,
\begin{align*}
d(x,y)+d(z,w)+2d(u_i,v_i)&=d(x,y)+d(z,w)+2\\
&\leq d(x,u_i)+d(y,u_i)+d(z,v_i)+d(w,v_i).
\end{align*}
By symmetry it suffices to show that, for any $x,y\in M\setminus \{u_i, v_i\}$,
\[
d(x,y)+1\leq d(x,u_i)+d(y,u_i).
\]
This inequality holds trivially if $d(x,u_i)+d(y,u_i)\geq 3$. It remains to note that if $d(x,u_i)+d(y,u_i)<3$, then $d(x,u_i)=d(y,u_i)=1$. Thus, $x=y=a$, and the desired inequality trivially holds.

We now show that $M$ does not have the LTP. Take $N\coloneqq \{a,b\}$. Then, for any $u,v\in M$, $u\neq v$, there exist $x,y\in N$ such that
\[
d(x,y)+d(u,v)\geq d(x,u)+d(y,v)+1.
\]
Indeed, set $U\coloneqq \{u_i\colon i\in \mathbb{N}\}$ and $V\coloneqq\{v_i\colon i\in \mathbb{N}\}$, and suppose that $u,v\in M$, $u\neq v$.
If $u,v\in U$ or $u,v\in V$, then, for $x=a$, $y=b$,
\begin{align*}
d(x,y)+d(u,v)= 4\geq 3=d(x,u)+d(y,v).
\end{align*}
If $u\in U$ and $v\in V$, or $u\in V$ and $v\in U$, then, respectively, for $x=a$, $y=b$, and for $x=b$, $y=a$,
\begin{align*}
d(x,y)+d(u,v)\geq 3>2=d(x,u)+d(y,v).
\end{align*}
Finally, if $u\in N$ or $v\in N$, then, respectively, for $x=u$, $y\in N\setminus \{x\}$, and for $y=v$, $x\in N\setminus\{y\}$,
\begin{align*}
d(x,y)+d(u,v)\geq 3>2\geq d(x,u)+d(y,v).
\end{align*}
\end{Ex}

In \cite[Proposition 4.7]{PR} it was shown that every infinite subset $M$ of $\ell_1$, viewed as a metric space, has the LTP. It turns out that every such $M$ has even the SLTP.
\begin{Ex}\label{ell1}
Every infinite subset $M$ of $\ell_1$, viewed as a metric space, has the SLTP.

Indeed, from \cite[Theorem 5.6]{HLLN} combined with our Theorem \ref{main} it follows that every unbounded metric space and every metric space $M$ with the property that $\inf\{d(x,y)\colon x,y\in M, x\neq y\}=0$ has the SLTP (this can also, without too much effort, be verified directly). Thus it suffices to consider the case when $M$ is a bounded and uniformly discrete subset of $\ell_1$. In this case there exist $R,r>0$ such that for any $x,y\in M$, $x\neq y$,
\[
r< d(x,y)< R.
\]

Let $N$ be a finite subset of $M$ and let $\varepsilon>0$. Choose $\delta >0$ such that $\varepsilon r\geq 6\delta$. Since $N$ is finite, there exists an $n\in \mathbb{N}$ such that for any $x=(x_i)\in N$
\[
\sum_{i> n} |x_i|\leq \delta.
\]
Since $M$ is infinite and bounded, there exist $u=(u_i),v=(v_i)\in M$, $u\neq v$,  such that
\[
\sum_{i\leq n} |u_i-v_i|\leq \delta.
\]
For any $x=(x_i),y=(y_i)\in N$ and $a=(a_i), b=(b_i)\in \{u,v\}$,
\begin{align*}
    \sum_i |x_i-y_i|&\leq \sum_{i\leq n} \bigr(|x_i-a_i|+|y_i-b_i|+|a_i-b_i|\bigr)+\sum_{i>n} |x_i-y_i| \\
    &\leq \sum_{i\leq n}\bigl(|x_i-a_i|+|y_i-b_i|\bigr)+3\delta
\end{align*}
and
\begin{align*}
    \sum_i |u_i-v_i|&\leq \sum_{i> n} |u_i-v_i-x_i+y_i|+\sum_{i> n}|x_i-y_i|+\sum_{i\leq n}|u_i-v_i|\\
    &\leq \sum_{i> n}\bigl(|x_i-u_i|+|y_i-v_i|\bigr)+3\delta.
\end{align*}
Therefore, for any $x=(x_i), y=(y_i), z=(z_i), w=(w_i)\in N$
\begin{align*}
(1-\varepsilon)(d(x,y)+d(u,v))&\leq d(x,y)+d(u,v)-6\delta\\
&=\sum_{i} |x_i-y_i|+\sum_i|u_i-v_i|-6\delta\\
&\leq  \sum_{i\leq n} \bigr(|x_i-u_i|+|y_i-v_i|\bigr) +3\delta\\
&\qquad +\sum_{i>n}\bigr(|x_i-u_i|+|y_i-v_i|\bigr)+3\delta-6\delta\\
&=\sum_i\bigr(|x_i-u_i|+|y_i-v_i|\bigr)\\
&=d(x,u)+d(y,v)
\end{align*}
and
\begin{align*}
    (1-&\varepsilon)\bigl(2d(u,v)+d(x,y)+d(z,w)\bigr)\\
    &\leq 2d(u,v)+d(x,y)+d(z,w)-12\delta\\
    &=2\sum_i|u_i-v_i|+ \sum_{i} |x_i-y_i|+\sum_i|z_i-w_i|-12\delta\\
    &\leq \sum_{i>n}\bigr(|x_i-u_i|+|z_i-v_i|+|y_i-u_i|+|w_i-v_i|\bigr)+6\delta\\
    &\qquad+\sum_{i\leq n}\bigr(|x_i-u_i|+|y_i-u_i|+|z_i-v_i|+|w_i-v_i|\bigr)+6\delta-12\delta\\
    &=\sum_i\bigl(|x_i-u_i|+|y_i-u_i|+|z_i-v_i|+|w_i-v_i|\bigr)\\
    &= d(x,u)+d(y,u)+d(z,v)+d(w,v).
\end{align*}
\end{Ex}

\section*{Acknowledgments}
The paper is a part of a Ph.D. thesis which is being prepared
by the author at University of Tartu under the supervision of Rainis Haller and Märt Põldvere.
The author is grateful to his supervisors for their valuable help.
This research was partially supported by institutional
research funding IUT20-57 of the Estonian Ministry of Education and
Research.

\addcontentsline{toc}{section}{References}

\end{document}